\documentclass{amsart}

\usepackage{graphicx}
\usepackage{amsmath}
\usepackage{amssymb}
\usepackage{amsthm}
\usepackage{amscd}
\usepackage{amsfonts}
\usepackage{mathtools}
\usepackage{color}
\usepackage{xcolor}
\usepackage[normalem]{ulem}
\usepackage[colorlinks=true,linkcolor=blue,citecolor=blue,urlcolor=blue]{hyperref}
\usepackage{cleveref}

\theoremstyle{plain}
	\newtheorem{theorem}{Theorem}[section]
	\newtheorem{lemma}[theorem]{Lemma}
    \newtheorem{cor}[theorem]{Corollary}
    \newtheorem{prop}[theorem]{Proposition}

\theoremstyle{definition}
    \newtheorem{defn}[theorem]{Definition}
    \newtheorem*{defn*}{Definition}
    \newtheorem{example}[theorem]{Example}
    \newtheorem*{example*}{Example}
    
\theoremstyle{remark}
	\newtheorem{remark}[theorem]{Remark}
	\newtheorem*{remark*}{Remark}
    \newtheorem{question}[theorem]{Question}
    \newtheorem*{question*}{Question}

\newcommand{\bbF}{\mathbb{F}}
\newcommand{\bbP}{\mathbb{P}}

\newcommand{\mcF}{\mathcal{F}}

\newcommand{\TC}{\operatorname{TC}}
\newcommand{\PTC}{\bbP\operatorname{TC}}
\newcommand{\CTC}{\overline{\operatorname{TC}}}

\title{Obstructions to embedding singular curves in toric varieties}

\author{Maya Banks}
\address{Department of Mathematics, Statistics, and Computer Science, University of Illinois Chicago, Chicago, IL, USA}
\email{mayadb@uic.edu}
\author{Izzet Coskun}
\address{Department of Mathematics, Statistics, and Computer Science, University of Illinois Chicago, Chicago, IL, USA}
\email{icoskun@uic.edu}
\author{Kevin Tucker}
\address{Department of Mathematics, Statistics, and Computer Science, University of Illinois Chicago, Chicago, IL, USA}
\email{kftucker@uic.edu}
\thanks{Banks was partially supported by the National Science Foundation grant DMS \#2037569. Coskun was partially supported by the National Science Foundation grant DMS \#2200684 and Simons Foundation Travel Support for Mathematicians SFI-MPS-TSM-00013580. Tucker was partially supported by
National Science Foundation grant \#2501904 and Simons Foundation Travel Support for Mathematicians SFI-MPS-TSM-00014083.}

\date{}

\begin{document}

\begin{abstract}
For every integer $d \geq 2$, we show that there exists an irreducible, reduced curve which embeds in $\bbP^d$ but in no projective normal toric variety of dimension less than $d$. In particular, there exist reduced irreducible curves that embed in $\bbP^d$ but do not embed in any weighted projective space of dimension less than $d$.
\end{abstract}

\maketitle

\section{Introduction}

Every smooth projective curve admits an embedding into $\bbP^3$. For singular curves, however, there is a local obstruction: if $p\in C$ is a singular point with Zariski tangent space $T_pC$ of dimension $d$, then $C$ cannot embed into any smooth projective variety of dimension strictly smaller than $d$.

A natural way to circumvent this obstruction is to allow the ambient variety itself to be singular. For a fixed curve $C$ with a singular point $p$, one may hope to embed $C$ into a  threefold $X$ possessing a singular point whose tangent space is sufficiently large to contain $T_pC$. This leads to the following question.

\begin{question}\label{q:wps}
Can every projective curve be embedded into some weighted projective threefold?
\end{question}

Some restrictions are clearly necessary for nonreduced curves. For instance, a nonreduced curve $C$ whose reduced structure has positive genus and whose tangent spaces all have dimension greater than $3$ cannot embed in a weighted projective threefold: tangent-space injectivity forces the  image of $C$ to lie in the singular locus of the weighted projective threefold, which is a union of points and rational curves. We therefore restrict our attention to reduced, irreducible projective curves.

Throughout the paper, we work over an uncountable algebraically closed base field $k$. Our main theorem is the following.

\begin{theorem}\label{main_thm}
    For every integer $d\geq 2$, there exists a reduced and irreducible projective curve $C_d \subset \bbP^d$ such that $C_d$ does not embed in any projective normal toric variety of dimension less than $d$.
\end{theorem}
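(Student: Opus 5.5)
The plan is a counting/moduli argument, using that the base field $k$ is uncountable. There are only countably many projective normal toric varieties of dimension less than $d$ up to isomorphism, since each is determined by a fan in $\mathbb{Z}^n$ with $n<d$. The singular points of such a variety come in finitely many torus orbits, and each orbit has a fixed analytic (complete local) type, given by an affine toric variety $U_\sigma\times(\text{torus})$. So only countably many isomorphism classes of complete local rings $\widehat{\mathcal{O}}_{X,x}$ occur as $X$ ranges over all projective normal toric varieties of dimension $<d$ and $x\in X$. If $C\hookrightarrow X$ with $p\mapsto x$, we get a surjection $\widehat{\mathcal{O}}_{X,x}\twoheadrightarrow \widehat{\mathcal{O}}_{C,p}$. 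The goal is therefore to build a curve singularity in $\mathbb{P}^d$, of embedding dimension $d$, whose complete local ring is not a quotient of any of these countably many rings.

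\textbf{Step 1 (local constraint).} I would first show that quotients of a fixed toric local ring $R=\widehat{\mathcal{O}}_{X,x}$, $\dim R<d$, whose embedding dimension equals $d$, form at most a countable union of families of dimension bounded in terms of $R$.

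Write $R=k[[y_1,\dots,y_N]]/I_\sigma$, with $I_\sigma$ the toric (binomial) ideal. A surjection onto $\widehat{\mathcal{O}}_{C,p}$ with tangent dimension $d$ is determined by $d$ power series in the generators. The tangent cone $\PTC$ of $C$ at $p$ is then a reduced union of lines, namely the branches, and it must lie in the projectivized tangent cone of $X$ at $x$ intersected with a $d$-dimensional linear subspace $L$ of $T_xX$.

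A cleaner invariant is the configuration of branch tangent directions. Take $C$ to have $m$ smooth branches at $p$ whose tangent lines span $\bbP^{d-1}$. Their tangent directions give $m$ points of $\bbP^{d-1}$, with a moduli space of dimension $m(d-1)-\dim PGL_d=m(d-1)-(d^2-1)$. On the other side, the points must lie in $\PTC_x(X)\cap \bbP(L)$. The tangent cone of an affine toric variety is itself toric of dimension $\dim X<d$, so $\PTC_x(X)\cap\bbP(L)$ has dimension at most $d-2$. Its choices (the choice of $L$, plus the fixed cone) contribute boundedly many parameters $c(R)$. The $m$ points then move in a family of dimension at most $m(d-2)+c(R)$ before quotienting by $PGL_d$, and this is smaller than $m(d-1)$ once $m\gg c(R)$.

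The main obstacle is that $c(R)$ depends on $R$, i.e.\ on $N$, which is unbounded. I would handle this in one of two ways. One is to use instead that the tangent cone of a toric singularity is a union of translates of subtori. The other, simpler, is to argue that for fixed $m$ the countable union over all $R$ of families, each of dimension less than the full moduli, cannot cover the moduli space. To make the second work, I need the bound $\dim(\text{points in }\PTC\cap \bbP(L))\le m(d-2)+(\text{something uniformly} < m)$. I would try to establish this by showing the intersection of the $\le(d-1)$-dimensional cone with $\bbP^{d-1}$ is a finite union of linear-like pieces of dimension $\le d-2$.

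\textbf{Step 2 (global realization).} Choose $m$ general points $q_1,\dots,q_m\in\bbP^{d-1}$, with $m$ large, realizing a tangent-direction configuration outside the countable union from Step 1. This is possible because a variety over an uncountable field is not a countable union of proper subvarieties. Then construct $C_d\subset\bbP^d$ with a point $p$ at which $m$ smooth branches meet with tangent lines in the directions $q_i$. For example, glue $m$ lines through $p$ in these directions into a connected reduced curve, then take a general member of a flat family of irreducible curves through $p$ preserving this seminormal $m$-fold point. Alternatively, project a smooth curve in high-dimensional space from a linear center so that $m$ chosen points collapse to $p$. The resulting irreducible curve has $\widehat{\mathcal{O}}_{C,p}$ whose branch-direction configuration avoids all toric quotients, so no embedding into a normal toric variety of dimension $<d$ exists. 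This proves Theorem~\ref{main_thm}.
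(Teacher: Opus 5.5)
Your overall route is the paper's. You send the branch tangent directions at $p$ into $\PTC_xX$ and compare dimensions against the $\operatorname{PGL}_d$-moduli of $m$ points in $\bbP^{d-1}$. You then use that only countably many such tangent cones occur and that $k$ is uncountable, and finally realize a very general configuration on an irreducible curve. However, Step 1 as written has a genuine gap, which you flag yourself and do not close.

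Your count $m(d-2)+c(R)$ charges separately for choosing the $d$-plane $L\subset T_xX$. So $c(R)$ includes the dimension $d(N-d)$ of the Grassmannian of $d$-planes in $T_xX\cong k^N$, and this is unbounded as $R$ varies. A single curve, with a single $m$, must avoid all $R$ at once. For every $R$ with $c(R)\geq m$, your bound therefore no longer shows that the realizable locus is a proper subset of the moduli, and the countable-union argument fails. The remedies you propose, a subtorus description of toric tangent cones or a decomposition of $\PTC_xX\cap\bbP(L)$ into ``linear-like pieces'', are not carried out, and they are not what is needed.

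The missing observation is that $L$ carries no extra parameters. Your $m$ directions span $\bbP(T_pC)\cong\bbP^{d-1}$, so $\bbP(L)$ is just the span of their images. Those images lie in $\PTC_xX$ itself, which has dimension $\dim X-1\le d-2$. Only equidimensionality of $\TC_xX$ is used here; it need not be toric, or even irreducible: $xy=z^3$ has tangent cone $xy=0$.

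So parametrize an $m$-tuple in $\PTC_xX$ spanning a $(d-1)$-plane, together with an identification of that plane with $\bbP^{d-1}$. The realizable ordered configurations then have dimension at most $m(d-2)+d^2-1$. This is less than $m(d-1)$ once $m>d^2-1$, independently of $N$. That is exactly the paper's count, and with it your countable-union argument goes through.

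Step 2 is also only gestured at. The flat family of irreducible curves ``preserving the $m$-fold point'' is asserted rather than constructed. The projection approach would need an argument that the projected tangent lines realize a prescribed very general configuration.

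The paper does this concretely. It blows up $p$, then blows up the points of $\Gamma$ on the exceptional divisor (call this $\mu$, with exceptional divisor $F$). It cuts with $d-1$ general members of $|\mu^*A^{\otimes\ell}\otimes\mathcal O(-F)|$ for $A$ ample and $\ell\gg0$, and pushes down. The strict transform of the resulting irreducible curve passes through every point of $\Gamma$, so $\PTC_pC\supseteq\Gamma$, which is all the obstruction requires.
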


Since weighted projective spaces are normal toric varieties, we obtain as an immediate corollary that, for every $d\geq2$, there exists a reduced and irreducible projective curve $C_d\subset \bbP^d$ which does not embed in any weighted projective space of dimension less than $d$. Taking $d=4$ gives a negative answer to Question \ref{q:wps}.

Our approach to Theorem \ref{main_thm} is to study the problem locally by identifying obstructions to embedding certain arrangements of concurrent lines. More precisely, we analyze unions of lines through a common point in $\mathbb{P}^d$ and show that a sufficiently general arrangement of this kind does not admit a vertex-preserving closed immersion into the completed tangent cone at any point in a projective normal toric variety of dimension $\delta<d$, provided the number of lines is large enough. We then construct curves with a unique singular point $p$ whose completed tangent cone at $p$ contains a prescribed union of concurrent lines.

In fact, we prove a more general avoidance statement: if $\mcF_i\to B_i$ is a countable collection of families whose members have dimension strictly smaller than $d$, with $\dim B_i\leq M$ for a constant $M$ independent of $i$, then there exists a reduced, irreducible curve $C\subset \bbP^d$ which cannot embed in any member of any of these families; see Theorem~\ref{main-thm-count-fam} for a precise statement.

The local obstruction used in Theorem \ref{main_thm} is sensitive to the normal toric hypothesis. In contrast, if one allows linear projections of weighted projective planes, then every reduced union of concurrent lines admits such an embedding (see Theorem \ref{nonnormal}). More generally, we show that these line configurations always admit an embedding into a nonnormal toric threefold; see Theorem~\ref{thm:equivariant-pinching}.

We will see, however, that eliminating the local obstruction is not sufficient. Even for smooth curves there exist subtler global obstructions to embedding into a fixed projective variety.
For example, general smooth curves of genus \(g>10\) do not embed into cones over smooth rational curves; see Remark \ref{rem:global-obstruction}.
Furthermore, over an algebraically closed field of characteristic zero, a smooth projective threefold contains every smooth projective curve if and only if it is rationally connected \cite{Lou24,San11}.
From this perspective, it would be interesting to determine the smallest natural class of projective threefolds into which every reduced irreducible projective curve can be embedded. For example, every curve can be embedded in a complete intersection surface or threefold.

\subsection{Acknowledgments} We would like to thank Nolan Schock for discussions related to this project.

\section{Preliminaries}
In this section, we fix notation and collect some standard facts concerning tangent cones, toric varieties, weighted projective spaces and Hirzebruch surfaces. We continue to write $k$ for our uncountable algebraically closed base field. All schemes are $k$-schemes unless otherwise specified, and by a variety we mean an integral separated scheme of finite type over $k$.

 \subsection{Tangent cones}\label{subsec:tangent-cones} Let $X$ be a finite-type $k$-scheme and let $p\in X$ be a closed point with maximal ideal $\mathfrak m_p\subset \mathcal O_{X,p}$. Set $G_p(X)=\operatorname{gr}_{\mathfrak m_p}\mathcal O_{X,p}$.
We write
\[
    \TC_pX=\operatorname{Spec} G_p(X),\qquad
    \PTC_pX=\operatorname{Proj} G_p(X),
\]
for the tangent cone of $X$ at $p$ and its projectivization. We also write
\[
    \CTC_pX=\operatorname{Proj}(G_p(X)[z]),
\]
 for the projective completion of the tangent cone, where $z$ has degree one. We call $\CTC_pX$ the completed tangent cone. It has a distinguished vertex, which we identify with $p$, corresponding on the affine chart $D_+(z)\cong \TC_pX$ to the homogeneous maximal ideal $G_p(X)_+$. Then $\TC_pX$ is the open set $D_+(z)\subset \CTC_pX$, while $\PTC_pX$ is the hyperplane at infinity $V(z)\subset \CTC_pX$.

If $G$ is a standard graded $k$-algebra and $T=\operatorname{Proj}(G[z])$, then the completed tangent cone of $T$ at its vertex is canonically isomorphic to $T$, since the local ring at the vertex is $G_{G_+}$ and its associated graded ring with respect to the homogeneous maximal ideal is canonically $G$.

Even when $X$ is integral, $\TC_pX$ need not be reduced or irreducible. If $X$ is pure of dimension $\delta$, then every irreducible component of $\TC_pX$ has dimension $\delta$ \cite[Appendix B.6.6]{F98}, and hence $\PTC_pX$ has dimension $\delta-1$ when $\delta>0$. We will use the standard fact that if $Y\hookrightarrow X$ is a closed immersion and $y\in Y$ maps to $x\in X$, then the induced degree-preserving surjection on associated graded rings gives closed immersions
\[
    \TC_yY \hookrightarrow \TC_xX,\qquad
    \PTC_yY \hookrightarrow \PTC_xX,\qquad
    \CTC_yY \hookrightarrow \CTC_xX.
\]
These facts follow from the construction of tangent cones via associated graded rings; see \cite[Appendix B]{F98}.

\subsection{Toric varieties and weighted projective spaces} A (not necessarily normal) toric variety is a variety $X$ that contains an algebraic torus as a dense open subset such that the natural action of the torus on itself extends to $X$. Over the algebraically closed field $k$, there are countably many normal toric varieties of fixed dimension up to isomorphism. Indeed, a normal toric variety of dimension $d$ is determined by a finite rational fan in a lattice isomorphic to $\mathbb{Z}^d$, and there are only countably many such fans. Moreover, each toric variety has only finitely many tangent cones up to isomorphism: a toric variety has finitely many torus orbits, and the torus action identifies the local rings, hence also the tangent cones, at any two points in the same orbit. For background on toric varieties, see \cite{CLS11}.

Let $a_0 \leq \cdots \leq a_n$ be a sequence of positive integers such that the greatest common divisor of any $n$ of the integers is 1. Let $\mathbb{P}(a_0, \dots, a_n)$ denote the \emph{weighted projective space} which is the quotient $(\mathbb{A}^{n+1} \setminus \{0\}) / \mathbb{G}_m$, where the multiplicative group $\mathbb{G}_m$ acts by \[\lambda \cdot(x_0, x_1, \dots, x_n) = (\lambda^{a_0} x_0, \lambda^{a_1} x_1, \dots, \lambda^{a_n} x_n).\] Equivalently, $\mathbb{P}(a_0,\ldots,a_n)=\operatorname{Proj}(k[x_0,\ldots,x_n])$, where $\deg x_i=a_i$. Weighted projective spaces are normal projective toric varieties; see, for instance, \cite{D82}.

\subsection{Hirzebruch surfaces}\label{subsec:hirzebruch-surfaces}
Finally, for $n\geq1$, write
$\bbP(1,1,n)=\operatorname{Proj}k[s,t,u]$, where
$\deg s=\deg t=1$ and $\deg u=n$. The degree-$n$ monomials define a
closed immersion
\[
\bbP(1,1,n)\hookrightarrow\bbP^{n+1},\qquad
[s:t:u]\longmapsto
[u:s^n:s^{n-1}t:\cdots:t^n].
\]
Its image is the cone over the rational normal curve of degree $n$.

Consider
the Hirzebruch surface
\[
\bbF_n\cong
\bbP\left(
\mathcal O_{\bbP^1}\oplus\mathcal O_{\bbP^1}(n)
\right).
\]
We denote by $E$ the negative section, with $E^2=-n$, and by $F$ the
class of a fiber of the ruling $\bbF_n\to\bbP^1$. The complete linear
series $|E+nF|$ defines a birational morphism
$\rho:\bbF_n\to\bbP(1,1,n)$ which contracts $E$ to the vertex of the
cone and maps the fibers of the ruling to the lines through the
vertex. When $n=1$, we have
$\bbP(1,1,1)\cong\bbP^2$, and $\rho:\bbF_1\to\bbP^2$ is the blowdown
of the $(-1)$-section $E$ to a smooth point. For $n\geq2$, the morphism $\rho$ is the minimal resolution of
$\bbP(1,1,n)$. For the basic geometry of Hirzebruch surfaces, see
\cite[Chapter V, Section 2]{Har77}.

More generally, let $R\subset\bbP^r$ be a smooth rational curve of
degree $n$, and let $\hat R$ be its projective cone. Since
$R\simeq\bbP^1$ and
$\mathcal O_R(1)\simeq\mathcal O_{\bbP^1}(n)$, the integral closure of
the homogeneous coordinate ring of $R$ is the homogeneous coordinate ring of
the rational normal curve of degree $n$. Hence the normalization map
of $\hat R$ is
$\nu:\bbP(1,1,n)\to\hat R$. Since $R$ is smooth, $\nu$ is an
isomorphism away from the cone point, and therefore
$\phi=\nu\circ\rho:\bbF_n\to\hat R$ restricts to an isomorphism
$\bbF_n\setminus E\simeq\hat R\setminus\{\text{cone point}\}$.

\section{Line arrangements}
In this section, we discuss embeddings of concurrent lines in  toric varieties.

\begin{defn}\label{defcgamma}
 Let $d \geq 2$, let $H$ be a hyperplane in $\bbP^d$, and let $\Gamma \subseteq H$ be a set of $m$ points spanning $H$.  In particular, $m\geq d$. Let $p\in \bbP^d$ be a point not contained in $H$ and let $C_{\Gamma}$ denote the cone over $\Gamma$ with vertex $p$.
\end{defn}

Observe that $C_{\Gamma}$ consists of $m$ concurrent lines through $p$, and its intersection with $H$ is $\Gamma$. Under the natural identification of $H$ with $\bbP \operatorname{T}_p\bbP^d$, the spanning assumption also gives $H=\bbP \operatorname{T}_pC_\Gamma$, and we have $\PTC_p(C_\Gamma)=\Gamma$ and $C_\Gamma\cong \CTC_p(C_\Gamma)$. We denote the cardinality of $\Gamma$ by $|\Gamma| = m$.

\begin{lemma}\label{lem-isom}
    Two curves $C_{\Gamma}$ and $C_{\Gamma'}$ are isomorphic if and only if $\Gamma$ and $\Gamma'$ are projectively equivalent.
\end{lemma}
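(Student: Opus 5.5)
The easy direction comes first. Suppose $\Gamma'=g(\Gamma)$ for some projective isomorphism $g\colon H\to H'$. Extend $g$ to a projective automorphism $\tilde g$ of $\bbP^d$ with $\tilde g(p)=p'$ and $\tilde g|_H=g$. Concretely, choose coordinates with $H=V(x_0)$ and $p=[1:0:\cdots:0]$, do the same for $(H',p')$, and take $\tilde g$ to be block diagonal. Then $\tilde g$ carries the cone over $\Gamma$ with vertex $p$ onto the cone over $\Gamma'$ with vertex $p'$. Hence $C_\Gamma\cong C_{\Gamma'}$.

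For the converse, suppose $\phi\colon C_\Gamma\to C_{\Gamma'}$ is an isomorphism of abstract schemes. If $m\geq 3$, then $p$ is the unique singular point of $C_\Gamma$, so $\phi(p)=p'$. The cases $m\le 2$ force $d\le 2$ with $m=d$. For $m=1$ the statement is vacuous. For $m=2$, $d=2$, any two spanning pairs of points in $\bbP^1$ are projectively equivalent, so the claim holds trivially. An isomorphism sending $p$ to $p'$ induces an isomorphism of local rings $\mathcal O_{C_\Gamma,p}\cong\mathcal O_{C_{\Gamma'},p'}$ compatible with the maximal ideals. This gives a graded isomorphism $G_p(C_\Gamma)\cong G_{p'}(C_{\Gamma'})$, and hence an isomorphism of projectivized tangent cones $\PTC_pC_\Gamma\cong \PTC_{p'}C_{\Gamma'}$. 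The key point is that this isomorphism is not merely an abstract isomorphism of the finite sets $\Gamma$ and $\Gamma'$. It comes from a graded isomorphism of standard graded algebras, so its degree-one part is a linear isomorphism
\[
(\mathfrak m_p/\mathfrak m_p^2)\cong(\mathfrak m_{p'}/\mathfrak m_{p'}^2),
\]
i.e.\ $T_p^*C_\Gamma\cong T^*_{p'}C_{\Gamma'}$. Projectivizing this linear map gives a projective isomorphism $\bbP T_pC_\Gamma\to\bbP T_{p'}C_{\Gamma'}$ sending $\PTC_p C_\Gamma$ onto $\PTC_{p'}C_{\Gamma'}$, because the ideal of the tangent cone is carried to the ideal of the tangent cone.

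It remains to invoke the identifications noted after Definition~\ref{defcgamma}. By the spanning assumption, $\bbP T_pC_\Gamma=H$ and $\PTC_pC_\Gamma=\Gamma$, and similarly for $\Gamma'$. Thus the map above is a projective isomorphism $H\to H'$ carrying $\Gamma$ to $\Gamma'$, so $\Gamma$ and $\Gamma'$ are projectively equivalent.

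The main point requiring care is the step where the tangent cone embedding is intrinsic. Here I would verify that the embedding $\PTC_p C_\Gamma\subset \bbP T_pC_\Gamma$ equals $\Gamma\subset H$ as embedded subschemes. Since $C_\Gamma$ is a cone with vertex $p$, its ideal in affine coordinates centered at $p$ is homogeneous and equals the ideal of the affine cone over $\Gamma$. Therefore $G_p(C_\Gamma)$ is the homogeneous coordinate ring of $\Gamma\subset H$. The spanning hypothesis ensures this ring has no linear forms in its ideal, so $\bbP T_pC_\Gamma$ is all of $H$.
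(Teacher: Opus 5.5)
Your proposal is correct and follows the paper's proof. The vertex is the unique singular point, so any isomorphism fixes it, and the induced linear isomorphism on (co)tangent spaces at the vertex projectivizes to a projective isomorphism $H\to H'$ carrying $\PTC_pC_\Gamma=\Gamma$ onto $\PTC_{p'}C_{\Gamma'}=\Gamma'$. Your extra check that the embedded projectivized tangent cone is $\Gamma\subset H$ spells out the identification the paper records after Definition~\ref{defcgamma}.

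The separate treatment of $m\le 2$ is unnecessary but harmless: $m\geq d\geq 2$ always holds, and when $m=2$ the vertex is already the unique singular point, since two distinct lines meet there.
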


\begin{proof}
    If $\Gamma$ and $\Gamma'$ are projectively equivalent, then $C_{\Gamma}$ and $C_{\Gamma'}$ are projectively equivalent, hence isomorphic.

    Conversely, suppose $f:C_{\Gamma}\to C_{\Gamma'}$ is an isomorphism. The vertex $p$ of $C_{\Gamma}$ is its unique singular point, and the same holds for the vertex $p'$ of $C_{\Gamma'}$. Hence $f(p)=p'$, and $df$ at $p$ induces a projective linear isomorphism $\bbP \operatorname{T}_pC_\Gamma\cong\bbP \operatorname{T}_{p'}C_{\Gamma'}$ carrying $\PTC_p(C_\Gamma)=\Gamma$ onto $\PTC_{p'}(C_{\Gamma'})=\Gamma'$. Thus $\Gamma$ and $\Gamma'$ are projectively equivalent.
\end{proof}

\begin{cor}
    \label{cor:dimofCgammas}
    When $m>d$, the moduli space parameterizing isomorphism classes of curves $C_{\Gamma}$ has dimension $m(d-1) - d^2 +1$.
\end{cor}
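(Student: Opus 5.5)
By Lemma~\ref{lem-isom}, isomorphism classes of curves $C_\Gamma$ with $|\Gamma|=m$ are in bijection with projective equivalence classes of configurations $\Gamma$ of $m$ points spanning $H\cong\bbP^{d-1}$. So the plan is to compute the dimension of the space of $m$ spanning points in $\bbP^{d-1}$ modulo the action of $PGL_d$.

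The parameter space of ordered configurations is the open subset $U\subset(\bbP^{d-1})^m$ where the points span $\bbP^{d-1}$. It has dimension $m(d-1)$. The group $PGL_d$ has dimension $d^2-1$.

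The key step is to show that the action of $PGL_d$ on $U$ has finite, in fact generically trivial, stabilizers once $m>d$. We use $m\ge d+1$ and a general configuration. In that case we may move $d+1$ of the points to be in linear general position. Such a frame is fixed only by the identity of $PGL_d$: the first $d$ points must go to the coordinate points, which forces a diagonal matrix, and fixing $[1:\cdots:1]$ forces that matrix to be scalar.

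Hence on the dense open set $U^\circ$ of configurations containing a frame, the action is free. Every orbit there has dimension $d^2-1$. The quotient therefore has dimension
\[
m(d-1)-(d^2-1)=m(d-1)-d^2+1.
\]
Concretely, fixing the first $d+1$ points to be the standard frame gives a slice. This slice identifies the generic locus with the open set of $(\bbP^{d-1})^{m-d-1}$, of dimension $(m-d-1)(d-1)$, which agrees with the formula above.

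Two further points remain.

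\textbf{Unordered points.} Since $\Gamma$ is an unordered set, we also quotient by $S_m$. This is a finite group, so it does not change the dimension.

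\textbf{Non-generic configurations.} These are spanning sets that contain no frame. Their loci are lower-dimensional, and the stabilizers there can be positive-dimensional. We should check that they do not yield a component of larger dimension. The orbit dimension can only drop by at most the drop in the dimension of the locus. So what we actually claim is that the moduli has dimension given by its generic part, in the sense of counting parameters for a general $\Gamma$.

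The main obstacle is making the word ``moduli space'' precise. The cleanest route is to state the result as a parameter count: a general $C_\Gamma$ with $|\Gamma|=m$ moves in a family of dimension exactly $m(d-1)-d^2+1$ up to isomorphism. This is justified by the free action on $U^\circ$ together with the frame slice.
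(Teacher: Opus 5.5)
Your proposal is correct and follows the paper's argument. Lemma~\ref{lem-isom} reduces the count to projective equivalence classes of $\Gamma$, the configuration space has dimension $m(d-1)$, and for $m>d$ a general configuration has finite stabilizer in the $(d^2-1)$-dimensional group $\operatorname{PGL}(d)$, giving $m(d-1)-d^2+1$; your frame argument and the slice check $(m-d-1)(d-1)=m(d-1)-d^2+1$ simply make explicit the finite-stabilizer claim the paper asserts without proof, and, like the paper, you only need the generic count, so your unproved remark about non-generic strata can be dropped.
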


\begin{proof}
    The locus of $m$ unordered distinct points in $\bbP^{d-1}$ is a dense open subset $U$ of the symmetric product $(\bbP^{d-1})^{(m)}$, which has dimension $m(d-1)$. By Lemma~\ref{lem-isom}, isomorphism classes of curves $C_\Gamma$ are the same as projective equivalence classes of configurations $\Gamma$. Since $\operatorname{PGL}(d)$ has dimension $d^2-1$ and a general configuration has finite stabilizer when $m>d$, the dimension of this family of isomorphism classes is $m(d-1)-(d^2-1)=m(d-1)-d^2+1$.
\end{proof}

\begin{example}
     Assume $d\geq 3$. The minimal embedding of $\bbP(1,1,d-1)$ is the cone in $\bbP^d$ over a rational normal curve of degree $d-1$ in $\bbP^{d-1}$. Recall that through any $d+2$ linearly general points in $\bbP^{d-1}$, there exists a unique rational normal curve of degree $d-1$. Hence, if $\Gamma \subset \bbP^{d-1}$ is a set of linearly general points with $|\Gamma| \leq d+2$, then $C_{\Gamma}$ embeds in $\bbP(1,1,d-1)$.

     On the other hand, if $|\Gamma| > d+2$, then $\Gamma$ does not in general lie on a rational normal curve. We conclude that the general $C_{\Gamma}$ embeds in $\bbP(1,1,d-1)$ if and only if $|\Gamma| \leq d+2$.
\end{example}

\begin{defn}
    By a \emph{family of $\delta$-dimensional varieties}, we mean a flat morphism $\mcF\to B$ of quasi-projective schemes of finite type over $k$, where $B$ is integral and every geometric fiber is an integral scheme of dimension $\delta$.
\end{defn}

Given a family of $\delta$-dimensional varieties $\mcF\to B$, the
tangent cones to its fibers vary in finitely many flat families whose
bases have dimension at most
$\dim\mcF=\dim B+\delta$. To see this, consider
$\mcF\times_B\mcF$: the first factor records a point $q$ in a fiber
$X$, while the second records a varying point of the same fiber. The
normal cone to the diagonal packages the tangent cones $\TC_qX$
\cite[\href{https://stacks.math.columbia.edu/tag/0630}{Definition 0630},
\href{https://stacks.math.columbia.edu/tag/0636}{Definition 0636}]
{stacks-project}. Although formation of the normal cone need not
commute with specialization, generic flatness and Noetherian induction
give a finite locally closed stratification of $\mcF$ such that, on each stratum, formation of the normal cone commutes with specialization and the resulting family is flat
\cite[\href{https://stacks.math.columbia.edu/tag/0H3Y}{Section 0H3Y}]
{stacks-project}. Taking relative Proj, before and after adjoining a
degree-one variable, then gives flat projective families whose fibers
are $\PTC_qX$ and $\CTC_qX$, respectively
\cite[\href{https://stacks.math.columbia.edu/tag/0B3U}{Lemma 0B3U},
\href{https://stacks.math.columbia.edu/tag/0D4C}{Lemma 0D4C}]
{stacks-project}.

\begin{theorem}\label{main-thm-family-local}
    Let $\mcF\to B$ be a family of $\delta$-dimensional varieties with $\delta < d$, and suppose $\dim B \leq M$. {If $|\Gamma|=m$ and $m(d-\delta)>M+\delta+d^2-1$, then the general $C_\Gamma$ does not admit a vertex-preserving closed immersion $C_\Gamma\cong \CTC_p(C_\Gamma)\hookrightarrow \CTC_qX$} for any point $q$ of any member $X$ of $\mcF$. In particular, the general $C_\Gamma$ does not embed into any member of $\mcF$.
\end{theorem}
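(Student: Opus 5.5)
Here is the plan: a dimension count. I compare the dimension of the moduli of configurations $\Gamma$ (Corollary~\ref{cor:dimofCgammas}) with the dimension of the locus of configurations that occur inside some projectivized tangent cone of a member of $\mcF$.

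First I reduce the statement to one about projectivized tangent cones. Suppose there is a vertex-preserving closed immersion $C_\Gamma\cong\CTC_p(C_\Gamma)\hookrightarrow \CTC_qX$. It respects the grading near the vertex, since it comes from comparing tangent cones at $p$ and $q$; more precisely, restricting to the hyperplane at infinity (equivalently, applying $\PTC$ at the vertex and using that the completed tangent cone of $\CTC_qX$ at its vertex is itself), we get a closed immersion $\Gamma=\PTC_p(C_\Gamma)\hookrightarrow \PTC_qX$. Moreover, this map is induced by a linear injection $H\cong \bbP T_pC_\Gamma\hookrightarrow \bbP T_q X$. So $\Gamma$ is projectively equivalent to a set of $m$ points of $\PTC_qX$ spanning a $(d-1)$-plane in $\bbP T_qX$. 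By the stratification discussed before the statement, the $\PTC_qX$ form finitely many flat projective families $\mathcal P_j\to S_j$, with $\dim S_j\le M+\delta$ and fibers of dimension $\delta-1$ sitting in a projective bundle.

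Next I build the incidence variety for each $j$. Let $I_j$ parametrize tuples $(s,x_1,\dots,x_m,\Lambda,\phi)$ where:
\begin{itemize}
\item $s\in S_j$ and the $x_i$ are points of the fiber $(\mathcal P_j)_s$;
\item $\Lambda$ is a $(d-1)$-plane in the tangent projective space spanned by the $x_i$;
\item $\phi:\Lambda\cong\bbP^{d-1}$ is an identification.
\end{itemize}
Since the $x_i$ determine $\Lambda$, the dimension is at most
\[
(M+\delta)+m(\delta-1)+(d^2-1).
\]
The map $I_j\to(\bbP^{d-1})^{(m)}$ sending the tuple to $\phi(\{x_i\})$ has image containing every $\Gamma$ for which $C_\Gamma$ admits such an immersion into a fiber of stratum $j$. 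The target has dimension $m(d-1)$. So the image is a constructible set of dimension $<m(d-1)$ exactly when $m(d-1)>M+\delta+m(\delta-1)+d^2-1$, that is, $m(d-\delta)>M+\delta+d^2-1$, which is the hypothesis. There are finitely many $j$, so the union of the images is a proper constructible subset, and the general $\Gamma$ avoids it.

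Finally, the ``in particular'' follows from the fact that an embedding $C_\Gamma\hookrightarrow X$ sends $p$ to some $q$ and induces $\CTC_p(C_\Gamma)\hookrightarrow\CTC_qX$, as recalled in \S\ref{subsec:tangent-cones}; this is vertex-preserving, and $C_\Gamma\cong\CTC_p(C_\Gamma)$.

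The main obstacle is the first step: extracting a linear embedding of $H$ into $\bbP T_qX$ from an abstract vertex-preserving closed immersion of completed cones. An arbitrary automorphism need not be graded. I would handle this by passing to tangent cones of $C_\Gamma$ and $\CTC_qX$ at their vertices. By the canonical isomorphism noted in \S\ref{subsec:tangent-cones}, the immersion induces a graded surjection $G_q(X)\to G_p(C_\Gamma)$, which is linear on degree-one parts and gives $\Gamma\subset\PTC_qX$ projectively. The rest is routine dimension bookkeeping, and making the incidence constructible via relative Hilbert schemes or Grassmannian bundles over $S_j$.
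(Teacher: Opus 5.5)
Your proposal is correct and follows the paper's proof. You use the same reduction: pass to tangent cones at the vertices and use $\CTC_v(\CTC_qX)\cong\CTC_qX$ to get a graded immersion, hence $\Gamma\hookrightarrow\PTC_qX$ spanning a $(d-1)$-plane. The dimension count is also the same up to bookkeeping. You count on $(\bbP^{d-1})^{(m)}$ with an explicit frame $\phi$, comparing $M+\delta+m(\delta-1)+d^2-1$ with $m(d-1)$. The paper compares $M+\delta+m(\delta-1)$ with the dimension $m(d-1)-d^2+1$ of the moduli space of the curves $C_\Gamma$. These are equivalent, and your version avoids having to form that quotient.
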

\begin{proof}
    {We may assume $\delta>0$. Consider a vertex-preserving closed immersion $C_\Gamma\hookrightarrow \CTC_qX$ for some point $q$ of a member $X$ of $\mcF$. Passing to completed tangent cones at the vertices gives a $\mathbb G_m$-equivariant closed immersion $C_\Gamma\cong\CTC_p(C_\Gamma)\hookrightarrow \CTC_v(\CTC_qX)$, where $v$ is the vertex of $\CTC_qX$. By the grading-compatible identification $\CTC_v(\CTC_qX)\cong \CTC_qX$ from Section~\ref{subsec:tangent-cones}, we may regard this as a $\mathbb G_m$-equivariant closed immersion $C_\Gamma\hookrightarrow\CTC_qX$. Restricting to the hyperplanes at infinity gives a closed immersion $\Gamma=\PTC_p(C_\Gamma)\hookrightarrow \PTC_qX$. Since the original immersion is injective on Zariski tangent spaces at the vertices, the images of the points of $\Gamma$ span a $(d-1)$-plane. As $\dim \PTC_qX=\delta-1$, it follows that the dimension of the space of possible configurations $\Gamma$ in $\PTC_qX$ is at most $m(\delta - 1)$. From the discussion above, the dimension of the families of isomorphism classes of tangent cones occurring in $\mcF$ are bounded above by $M + \delta$. Thus the curves $C_\Gamma$ admitting such an embedding into $\CTC_qX$, for some point $q$ of some member $X$ of $\mcF$, vary in a family of dimension at most $M+\delta+m(\delta-1)$. If $m(d-\delta)>M+\delta + d^2-1$, then this dimension count $M+\delta + m(\delta-1)< m(d-1) - d^2 + 1$ is strictly smaller than the dimension of the moduli space of curves of type $C_\Gamma$ from Corollary
    ~\ref{cor:dimofCgammas}. Hence these curves lie in a proper
    subvariety of that moduli space.}
    Finally, if $C_\Gamma$ admitted an embedding into a member $X$ of $\mcF$, with $p$ mapping to $q$, functoriality of completed tangent cones would give $C_\Gamma\cong \CTC_p(C_\Gamma)\hookrightarrow \CTC_qX$, contradicting the preceding conclusion.
\end{proof}

\begin{cor}\label{cor-count-fam}
    Let $\mcF_i \to B_i$ be a countable collection of families of varieties of dimensions strictly smaller than $d$, and suppose $\dim(B_i) \leq M$ for every $i$. If $|\Gamma|=m\geq M+d^2+d-1$, then the very general $C_\Gamma$ does not admit a vertex-preserving closed immersion $C_\Gamma\cong \CTC_p(C_\Gamma)\hookrightarrow \CTC_qX$ for any point $q$ of any member $X$ of any $\mcF_i$. In particular, the very general $C_\Gamma$ does not embed into any member of any $\mcF_i$.
\end{cor}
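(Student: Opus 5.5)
The plan is to apply Theorem~\ref{main-thm-family-local} to each family $\mcF_i\to B_i$ separately, and then use the uncountability of $k$ to pass from ``general'' to ``very general.''

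\emph{Step 1: verify the numerical hypothesis for each family.} Fix $i$. Let $\delta_i<d$ be the fiber dimension of $\mcF_i$; we may assume $\delta_i\ge 1$, since zero-dimensional members cannot contain a curve. Then $d-\delta_i\ge 1$ and $\delta_i\le d-1$. With $m\ge M+d^2+d-1$ this gives
\[
m(d-\delta_i)\ \ge\ m\ \ge\ M+d^2+d-1\ >\ M+(d-1)+d^2-1\ \ge\ M+\delta_i+d^2-1 .
\]
So the hypothesis of Theorem~\ref{main-thm-family-local} holds for every $i$, with the same $m$. Note that $m>d$ as well, so Corollary~\ref{cor:dimofCgammas} applies.

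\emph{Step 2: extract a proper closed subset for each $i$.} The proof of Theorem~\ref{main-thm-family-local} shows more than its statement. The isomorphism classes of $C_\Gamma$ that admit a vertex-preserving closed immersion into some $\CTC_qX$, with $X$ a member of $\mcF_i$, form a set $Z_i$. This set is the image of finitely many finite-type parameter spaces, one for each of the finitely many flat strata of tangent cones. Hence $Z_i$ is constructible and of dimension strictly less than $m(d-1)-d^2+1$. Its closure $\overline{Z_i}$ is therefore a proper closed subset of the irreducible moduli space of configurations. Equivalently, we can pull back to the open set $U\subset(\bbP^{d-1})^{(m)}$ of spanning configurations: the preimage of $Z_i$ lies in a proper closed subset $W_i\subsetneq U$. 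This is the point to state carefully. We want an honest proper closed subset for each $i$, not merely the phrase ``general,'' and the constructibility comes from Chevalley's theorem applied to the incidence of configurations inside the flat families of $\CTC_qX$.

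\emph{Step 3: take the countable union and conclude.} Because $k$ is uncountable, the irreducible variety $U$ is not a countable union of proper closed subsets. So the complement of $\bigcup_i W_i$ is nonempty, and in fact very general. Any $\Gamma$ in this complement gives a $C_\Gamma$ with no vertex-preserving closed immersion into $\CTC_qX$, for any $q$ and any member $X$ of any $\mcF_i$. Finally, suppose $C_\Gamma$ embedded in some member $X$, with the vertex $p$ mapping to $q$. Functoriality of completed tangent cones would give $C_\Gamma\cong\CTC_p(C_\Gamma)\hookrightarrow\CTC_qX$, which is a contradiction.

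The only delicate point is Step 2, namely making precise that the bad locus for each family lies in a proper \emph{closed} (not merely non-dense) subset.
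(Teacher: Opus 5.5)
Your proposal is correct and follows the paper's own argument. Like the paper, you check that $m(d-\delta_i)>M+\delta_i+d^2-1$ for each $i$, apply Theorem~\ref{main-thm-family-local} to put each bad locus inside a proper closed subset, and then use the uncountability of $k$ to avoid the countable union. Your Step~2, on constructibility and pulling back to $U$, only spells out what the paper means by ``contained in a proper subvariety of the moduli space.''
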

\begin{proof}
    Let $\delta_i<d$ be the dimension of the members of $\mcF_i$. Since $m\geq M+d^2+d-1$, we have $m(d-\delta_i)>M+\delta_i+d^2-1$ for every $i$. By Theorem \ref{main-thm-family-local}, for each $i$, the locus of curves $C_\Gamma$ admitting such an embedding into $\CTC_qX$ for some point $q$ of some member $X$ of $\mcF_i$ is contained in a proper subvariety of the moduli space of curves of type $C_\Gamma$. Since there are countably many families and the base field $k$ is uncountable, the very general $C_\Gamma$ avoids all of these proper subvarieties.
\end{proof}

\begin{cor}
  {If $|\Gamma|=m>d^2-1$, then the very general $C_\Gamma$ does not admit a vertex-preserving closed immersion $C_\Gamma\cong\CTC_p(C_\Gamma)\hookrightarrow\CTC_qX$ for any point $q$ of any projective normal toric variety $X$ of dimension less than $d$. In particular, the very general $C_\Gamma$ does not embed into any projective normal toric variety of dimension less than $d$.}
\end{cor}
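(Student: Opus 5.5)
We apply Corollary~\ref{cor-count-fam} with $M=0$. The key input is that, up to isomorphism, there are only countably many projective normal toric varieties of each dimension $\delta<d$, as recalled in the preliminaries. Each of them can be regarded as a family over a point.

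We index the collection as follows. For each $\delta\in\{1,\dots,d-1\}$ and each isomorphism class of projective normal toric variety $X$ of dimension $\delta$, take the trivial family $\mcF_X=X\to B_X=\operatorname{Spec}k$. Each $X$ is a variety, and projective implies quasi-projective, so this is flat over an integral base and its unique geometric fiber is integral of dimension $\delta$. We may discard $\delta=0$, since a point contains no curve and has no room for $C_\Gamma$. We get a countable collection of families of varieties of dimensions strictly smaller than $d$, all with $\dim B_X=0$. So the hypothesis of Corollary~\ref{cor-count-fam} holds with $M=0$.

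It remains to match the numerical bound. With $M=0$, Corollary~\ref{cor-count-fam} requires $m\geq d^2+d-1$. This is stronger than the stated $m>d^2-1$, that is, $m\geq d^2$. To close the gap, I would go back to the inequality in Theorem~\ref{main-thm-family-local}, namely $m(d-\delta)>M+\delta+d^2-1$, and check it directly with $M=0$. We need $m(d-\delta)>\delta+d^2-1$ for all $1\le\delta\le d-1$.

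The worst case is $\delta=d-1$, where the condition reads $m>d^2+d-2$. This still exceeds $d^2-1$.

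So the dimension count as stated does not obviously give $m>d^2-1$. I would handle this in one of two ways:
\begin{itemize}
\item Use the stronger hypothesis produced by the argument, namely $m\geq d^2+d-1$. This only strengthens the requirement on $m$, and any fixed $m$ suffices for the application to Theorem~\ref{main_thm}.
\item Refine the count. For a toric $X$ the tangent cones come from finitely many torus orbits, so the family of tangent cones has dimension $0$ rather than $M+\delta$. The term $\delta$ then disappears, and the condition becomes $m(d-\delta)>d^2-1$. This holds for all $\delta\le d-1$ exactly when $m>d^2-1$.
\end{itemize}

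I would take the second route. Concretely, for each of the countably many $X$ and each of its finitely many orbit representatives $q$, the curves $C_\Gamma$ embedding in $\CTC_qX$ vertex-preservingly vary in a family of dimension at most $m(\delta-1)$. This follows from the argument of Theorem~\ref{main-thm-family-local} with the tangent-cone parameter fixed. The inequality $m(\delta-1)<m(d-1)-d^2+1$ is equivalent to $m(d-\delta)>d^2-1$, which holds since $d-\delta\ge1$ and $m>d^2-1$. Corollary~\ref{cor:dimofCgammas} then shows each such locus is a proper subvariety of the moduli space. There are countably many pairs $(X,q)$, so uncountability of $k$ lets the very general $C_\Gamma$ avoid all of them.

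Finally, functoriality of completed tangent cones gives the embedding statement. If $C_\Gamma\hookrightarrow X$ with $p\mapsto q$, we would get $C_\Gamma\cong\CTC_p(C_\Gamma)\hookrightarrow\CTC_qX$, and $\CTC_qX$ is isomorphic to the completed tangent cone at the orbit representative of $q$, which we have just excluded. The main point needing care is this refined count: that a fixed $X$ contributes only finitely many tangent cones, so the $M+\delta$ term can be dropped.
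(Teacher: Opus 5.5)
Your second route is correct and is essentially the paper's proof: fix each of the countably many completed tangent cones $\CTC_qX$ (finitely many per toric $X$, by the torus-orbit argument), bound the locus of embeddable $C_\Gamma$ by $m(\delta-1)$, and compare with $m(d-1)-d^2+1$ from Corollary~\ref{cor:dimofCgammas}. You were also right that applying Corollary~\ref{cor-count-fam} directly with $M=0$ only yields $m\ge d^2+d-1$, so your first bullet would not prove the statement as written.
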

\begin{proof}
    {There are only countably many normal toric varieties of dimension less than $d$, and each has only finitely many tangent cones up to isomorphism. Thus the completed tangent cones that arise form a countable collection of fixed cones. Fix one of them, call it $\overline T$, and write $\delta=\dim \overline T<d$.
    As argued at the beginning of the proof of Theorem \ref{main-thm-family-local}, the locus of curves $C_\Gamma$ admitting a vertex-preserving closed immersion into $\overline T$ has dimension at most $m(\delta-1)$.
    Since $m>d^2-1$ and $\delta<d$, we have $m(\delta-1)<m(d-1)-d^2+1$, so this locus is contained in a proper subvariety of the moduli space of curves of type $C_\Gamma$ from Corollary \ref{cor:dimofCgammas}. Since there are only countably many such cones and the base field $k$ is uncountable, the very general $C_\Gamma$ admits no such immersion. Finally, an embedding into a projective normal toric variety $X$ would induce a closed immersion $C_\Gamma\cong\CTC_p(C_\Gamma)\hookrightarrow\CTC_qX$ at the image $q$ of $p$.}
\end{proof}

The normal toric hypothesis is essential in this local obstruction, as the next two results will demonstrate. We first show that $C_{\Gamma}$ can always be embedded in a linear projection of a weighted projective plane $\bbP(1,1,n)$ if $n$ is sufficiently large. However, these projections need not be equivariant, so the resulting surfaces are not in general toric. Nevertheless, we then show there is also a genuinely toric nonnormal version: every $C_\Gamma$ embeds into a nonnormal projective toric threefold.

\begin{theorem}\label{nonnormal}
   The curve $C_{\Gamma}$ can be embedded in a linear projection of the weighted projective plane $\bbP(1,1,n)$ if $n$ is sufficiently large.
\end{theorem}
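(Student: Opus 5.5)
Our plan is to realize $C_\Gamma$ inside the cone $\hat R$ over a smooth rational curve $R\subset H\cong\bbP^{d-1}$ that spans $H$ and passes through all $m$ points of $\Gamma$, and then to identify $\hat R$ with a linear projection of $\bbP(1,1,n)$. First we choose $R$. Write $\Gamma=\{\gamma_1,\dots,\gamma_m\}\subset H$. Pick distinct points $t_1,\dots,t_m\in\bbP^1$. Pick homogeneous forms $f_0,\dots,f_{d-1}$ of degree $n$ on $\bbP^1$ such that $[f_0(t_j):\cdots:f_{d-1}(t_j)]=\gamma_j$ for every $j$. This is a Lagrange interpolation problem: each coordinate imposes $m$ linear conditions on $H^0(\mathcal O_{\bbP^1}(n))$, which has dimension $n+1$. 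For $n\geq m$ it is solvable, with scalars at each $t_j$ chosen so that not all coordinates vanish. The forms cannot all share a common zero, since away from the $t_j$ we still have $n+1-m$ free parameters in each coordinate. The map $\bbP^1\to H$ is then a morphism whose image passes through $\Gamma$, and it spans $H$ because $\Gamma$ does.

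The main obstacle is to make the map $\bbP^1\to H$ a closed immersion, so that $R$ is a \emph{smooth} rational curve of degree $n$. This is needed to apply the description in Subsection~\ref{subsec:hirzebruch-surfaces}, and $\mathbb P^{d-1}$ may be only $\bbP^1$ when $d=2$. We therefore treat $d\geq3$ and $d=2$ separately.

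\emph{Case $d\geq 3$.} We embed $\bbP^1$ by a degree-$n$ map into a large $\bbP^N$, namely the rational normal curve composed with a generic interpolation. The space of maps with the prescribed values at $t_j$ is an affine space of dimension roughly $d(n+1-m)$. For $n\gg m$, a standard dimension count shows that a general such map is injective and immersive: double points and cusps impose more conditions than the available freedom once $d-1\geq 3$. For $d-1=2$, however, plane curves of degree $n\ge 3$ are singular. So I would instead take $R$ to be a smooth rational normal curve in a larger $\bbP^{n}\supset$, interpolating $\Gamma$ after lifting, and view $\hat R\subset\bbP^{n+1}$.

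\emph{Reduction to $\bbP(1,1,n)$.} Concretely, $\bbP(1,1,n)\subset\bbP^{n+1}$ is the cone over the rational normal curve $R_n\subset\bbP^n$. Choose points $\tilde\gamma_j=R_n(t_j)$. Since $m\le n+1$ for $n$ large, these are linearly independent. Hence there is a linear projection $\pi:\bbP^n\dashrightarrow H$, with center disjoint from $R_n$ for general choices, sending $\tilde\gamma_j\mapsto\gamma_j$. Extend $\pi$ to the projection $\bbP^{n+1}\dashrightarrow\bbP^d$ fixing the vertex coordinate $u$ and sending the cone vertex to $p$. The image $S$ of $\bbP(1,1,n)$ is the cone with vertex $p$ over $\pi(R_n)$, so it contains the lines $\overline{p\gamma_j}$, that is, $C_\Gamma\subset S\subset\bbP^d$. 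Since $C_\Gamma\subset\bbP^d$ is closed, this is a closed embedding into the linear projection $S$ of $\bbP(1,1,n)$, as desired.

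With this concrete reduction, the smoothness worry disappears. We need only that the projection center avoids the lifted lines, which holds generically. The remaining step to check is the existence of a projection taking the independent points $\tilde\gamma_j$ to the given $\gamma_j$. This follows because linear maps on the span of the independent points $\tilde\gamma_j$ may be prescribed freely, with scalings, and then extended.
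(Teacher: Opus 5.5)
Your final argument (choosing the independent points $R_n(t_j)$ on the rational normal curve, prescribing a projection that sends them to $\Gamma$, and coning from the vertex) is correct and is essentially the paper's proof, since such a projection is exactly the Lagrange-interpolating basepoint-free subsystem of $|\mathcal O_{\bbP^1}(n)|$ used there. As you eventually note, smoothness of $R$ plays no role in this statement (the paper only remarks that it can be arranged when $d\geq 4$), so the unfinished case split on $d$ in your middle paragraph can simply be deleted.
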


\begin{proof}
   Given $\Gamma\subset H\simeq\bbP^{d-1}$, choose $n\gg0$. By Lagrange interpolation, there is a basepoint-free linear subsystem of $|\mathcal O_{\bbP^1}(n)|$ defining a morphism $f:\bbP^1\to H$ whose image $R=f(\bbP^1)$ contains $\Gamma$. If $d\geq4$, one can moreover choose the subsystem so that $R$ is smooth. Let $\hat R\subset\bbP^d$ be the cone over $R$ with vertex $p$. Since $\Gamma\subset R$, the cone $\hat R$ contains $C_\Gamma$.

   The complete linear system $|\mathcal O_{\bbP^1}(n)|$ gives the rational normal curve of degree $n$, and the subsystem defining $f$ realizes $R$ as a linear projection of that rational normal curve. Taking cones, $\hat R$ is therefore the corresponding
    linear projection of the standard embedding of $\bbP(1,1,n)$
    as described in Section~\ref{subsec:hirzebruch-surfaces}.
\end{proof}

\begin{remark}
For a general choice of the interpolating curve $R$, the surface $\hat R$ is not toric. Indeed, assume for simplicity that $d \geq 4$ so that $f$ is an embedding. Being toric would force the dense torus to fix the vertex and hence act on $R=\PTC_p\hat R$. This action cannot be trivial, since otherwise the torus orbits off the vertex would lie in the ruling lines of the cone. Thus, after identifying $R\simeq\bbP^1$, the linear series defining $R\subset H$ would be spanned by $\mathbb G_m$-eigenvectors, so $R$ would be projectively equivalent to a monomial curve. A general interpolating $R$ is not projectively equivalent to such a curve.
\end{remark}

\begin{theorem}\label{thm:equivariant-pinching}
 There exist a projective nonnormal toric threefold $X_\Gamma$, a torus-fixed point $x\in X_\Gamma$, and a closed immersion $C_\Gamma\hookrightarrow X_\Gamma$ taking the vertex $p$ of $C_\Gamma$ to $x$.
\end{theorem}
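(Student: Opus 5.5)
The idea is to obtain $X_\Gamma$ from a smooth projective toric threefold $Y$ by a torus-equivariant \emph{pinching} in the sense of Ferrand. We glue $m$ torus-invariant lines of $Y$ at $m$ torus-fixed points into a single point. The local ring at that point is to be the local ring of $C_\Gamma$ at $p$. The point of the construction is that, once the torus acts on all the relevant tangent directions by one and the same character, every linear-algebraic gluing datum is automatically torus-stable. So the position of $\Gamma$ in $H$ puts no constraint on equivariance.

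\emph{Step 1: the normal model.} Let $S$ be a smooth projective toric surface with at least $m$ torus-fixed points $s_1,\dots,s_m$; iterated toric blowups of $\bbP^2$ provide such $S$. Set $Y=\bbP^1\times S$ with coordinate $t$ on $\bbP^1$, put $y_i=(0,s_i)$, and let $L_i=\bbP^1\times\{s_i\}$. The $L_i$ are pairwise disjoint torus-invariant lines. The torus acts on the local parameter $t$ along every $L_i$ by the same character $\chi$.

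\emph{Step 2: the gluing datum.} The normalization of $C_\Gamma$ is $\bigsqcup_i L_i$. Its local ring at $p$ is a subring of $\prod_i k[[t]]$, namely $k[[x_1,\dots,x_d]]/I$, where $I$ is the ideal of the cone over $\Gamma$. This subring is graded: its degree-$j$ piece $W_j\subset (k\,t^j)^m$ is the image of the degree-$j$ forms restricted to $\Gamma$. Since $\Gamma$ imposes independent conditions on forms of large degree, $W_j=(k\,t^j)^m$ for $j\ge N$. Let $Z\subset Y$ be the union of the $(N-1)$-st order neighborhoods of $y_i$ in $L_i$, with $\mathcal O_Z=\prod_i k[t]/(t^N)$. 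Let $A=\bigoplus_{j<N}W_j\subset \mathcal O_Z$ be the corresponding finite $k$-algebra. Each $W_j$ lies in the $\chi^j$-eigenspace of $\mathcal O_Z$, so $A$ is torus-stable.

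\emph{Step 3: pinching.} Define $X_\Gamma$ as the pushout $Y\sqcup_Z\operatorname{Spec}A$. Locally its functions are those on $Y$ whose restriction to $Z$ lies in $A$. By Ferrand's theorem on pinchings along finite subschemes, this pushout exists as a scheme, and the map $Y\to X_\Gamma$ is finite and birational and an isomorphism off $Z$. Since $Z$ is finite inside a projective variety, $X_\Gamma$ is projective: choose an ample line bundle with sections whose restriction to $Z$ lies in $A$.

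The torus action descends because $Z$ and $A$ are torus-stable. The dense torus of $Y$ misses $Z$, so it remains a dense orbit, and $X_\Gamma$ is a toric threefold. The image $x$ of the $y_i$ is torus-fixed. $X_\Gamma$ is nonnormal at $x$: its normalization $Y\to X_\Gamma$ is not injective there when $m\ge2$, and $m\ge d\ge2$.

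\emph{Step 4: the embedding and the main obstacle.} The image of $\bigsqcup L_i$ in $X_\Gamma$ is the pinching of $\bigsqcup L_i$ along $Z$ by $A$. Its local ring at $x$ is $k+\{f\in\prod_i\mathcal O_{L_i,y_i}: f|_Z\in A\}$, and by the choice of $N$ this equals the completed local ring of $C_\Gamma$ at $p$. Away from $x$ the image is $m$ disjoint affine lines, as is $C_\Gamma$ away from $p$. Gluing the two descriptions gives an isomorphism of the image with $C_\Gamma$ sending $p$ to $x$.

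The hardest step is this identification together with the closed-immersion property. One must show that the local ring of $C_\Gamma$ at $p$ is exactly the subring of $\prod k[[t]]$ cut out by finitely many jet conditions. This is the statement that the conductor contains $(t^N)^m$, which follows from $W_j=(kt^j)^m$ for $j\ge N$ plus completeness. One must also show that the pinching of the subscheme $\bigsqcup L_i$ is a closed subscheme of the pinching of $Y$. For the latter I would check that $\mathcal O_{X_\Gamma,x}\to\mathcal O_{C_\Gamma,p}$ is surjective. Any element of $\mathcal O_{C_\Gamma,p}$ differs from a function in the pullback of $A$ by an element of the conductor, and functions on $Y$ vanishing to order $N$ at the $y_i$ restrict onto the conductor of $\bigsqcup L_i$, because the $L_i$ are disjoint smooth curves in the smooth variety $Y$.
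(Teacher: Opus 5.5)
Your proof is correct, and it produces the same threefold as the paper, but the gluing datum and the way you get the closed immersion differ.

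\textbf{The paper's route.} The paper uses the same $Y=\bbP^1\times S$. It takes Ferrand's pushout $Y\amalg_F C_\Gamma$ along the whole normalization $\eta\colon F=\coprod_i F_i\to C_\Gamma$, letting $T$ act on $C_\Gamma$ by scaling through $T\to\mathbb G_m$. This is your ``common character $\chi$'' observation. With this datum, the closed immersion $C_\Gamma\hookrightarrow X_\Gamma$ is part of the conclusion of the pushout theorem, and nothing needs to be computed.

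\textbf{Your route.} You pinch along the Artinian subscheme $Z$ via $\operatorname{Spec}A$. This makes torus-stability a transparent eigenspace statement and exhibits the singularity at $x$ by finitely many jet conditions. The cost is that you must show by hand that the image of $\bigsqcup L_i$ is $C_\Gamma$. That is exactly the identity $\mathcal O_{C_\Gamma,p}=\{g : g|_Z\in A\}$ inside the semilocal ring of the normalization. The same identity shows that $\{f : f|_Z\in A\}=\{f : f|_F\in\mathcal O_{C_\Gamma,p}\}$ at $x$, so your pushout and the paper's coincide.

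\textbf{Tightening the identification.} Your conductor argument works, and it is simpler than you make it. The graded ring $\bigoplus_j W_j\subset\prod_i k[t]$ contains $\prod_i t^Nk[t]$ on the nose. Localizing it gives the local ring directly, so no completion is needed. You should phrase the claim for $\mathcal O_{C_\Gamma,p}$ rather than its completion, since equality of completions alone does not give an isomorphism of schemes. Surjectivity onto the scheme-theoretic image is also immediate: any $g$ on $\bigsqcup L_i$ with $g|_Z\in A$ lifts to $\mathcal O_{Y,\{y_i\}}$, and every such lift lies in $\mathcal O_{X_\Gamma,x}$.

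\textbf{Two sketched points.}
\begin{itemize}
\item Your one-line projectivity argument should be made precise as the paper does. Descend a hyperplane section $D$ avoiding the $y_i$ to a Cartier divisor $D_X$ on $X_\Gamma$. Then use that ampleness of $\mathcal O_{X_\Gamma}(D_X)$ can be checked after pullback along the finite surjective map $Y\to X_\Gamma$.
\item The descent of the $T$-action needs compatibility of pushouts with flat base change, applied to $T\times(-)$.
\end{itemize}
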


\begin{proof}
We first describe the construction informally. The normalization of $C_\Gamma$ is a disjoint union of copies of $\bbP^1$. We place these copies as disjoint torus-invariant curves in a smooth projective toric threefold and then glue them together using a pushout construction to obtain $C_\Gamma$ as a closed subscheme. Since the construction is equivariant and does not change the dense torus, the resulting threefold is toric.

Put $m=|\Gamma|$, and let
$\eta:\widetilde C_\Gamma=\coprod_{i=1}^m\bbP^1\to C_\Gamma$
be the normalization, with $\eta^{-1}(p)=\{p_1,\ldots,p_m\}$. Choose a smooth projective toric surface $S$ with distinct torus-fixed points $s_1,\ldots,s_m$, and set $Y=\bbP^1\times S$. Then $Y$ is a smooth projective toric threefold with torus $T=\mathbb G_m\times T_S$. Identify the $i$-th component of $\widetilde C_\Gamma$ equivariantly with
$F_i=\bbP^1\times\{s_i\}$, taking $p_i$ to $y_i=(0,s_i)$, and write $F=\coprod_iF_i$. Let $T$ act on $C_\Gamma$ through the projection $T\to\mathbb G_m$ and the scaling action on $C_\Gamma$. Then $\eta:F\to C_\Gamma$ is $T$-equivariant.

Since $F\hookrightarrow Y$ is a closed immersion, $\eta$ is finite, and $Y$ and $C_\Gamma$ are projective (so that all finite sets of points are contained in an affine open), \cite[Theorem 5.4]{F03} (see also \cite[\href{https://stacks.math.columbia.edu/tag/0ECH}{Section 0ECH}]{stacks-project}) gives the pushout
$X_\Gamma=Y\amalg_F C_\Gamma$,
together with a finite morphism $\nu:Y\to X_\Gamma$ and a closed immersion $C_\Gamma\hookrightarrow X_\Gamma$. Let $x$ be the image of the vertex $p$. Since $\eta$ is surjective, $\nu$ is surjective. Moreover, $\eta$ restricts to an isomorphism
$F\setminus\{y_1,\ldots,y_m\}\simeq C_\Gamma\setminus\{p\}$.
The affine-local description of the pushout in
\cite[\href{https://stacks.math.columbia.edu/tag/0E25}{Proposition 0E25}]{stacks-project}
therefore shows that $\nu$ induces an isomorphism
$Y\setminus\{y_1,\ldots,y_m\}\simeq X_\Gamma\setminus\{x\}$.
The scheme $X_\Gamma$ is separated and locally of finite type by
\cite[\href{https://stacks.math.columbia.edu/tag/0E26}{Lemma 0E26} and
\href{https://stacks.math.columbia.edu/tag/0E27}{Lemma 0E27}]{stacks-project}.
Since $\nu$ is surjective and $Y$ is quasi-compact, $X_\Gamma$ is also
quasi-compact, and hence is of finite type.

Since $C_\Gamma$ is reduced, the natural map
$\mathcal O_{C_\Gamma}\to\eta_*\mathcal O_F$
is injective. As $Y$ is integral, \cite[Proposition 5.6(2)]{F03} implies that $X_\Gamma$ is integral. The isomorphism above shows that \(\nu\) is birational, so \(X_\Gamma\) is a threefold. Since  $\nu$ is also finite and $Y$ is normal, $\nu$ is the normalization of $X_\Gamma$. Since $m\geq d\geq2$, the distinct points $y_1,\ldots,y_m$ all map to $x$, so $\nu$ is not an isomorphism and hence $X_\Gamma$ is nonnormal.
The compatible $T$-actions descend to $X_\Gamma$ by flat base change for pushouts \cite[Lemma 4.4]{F03}. The dense torus of $Y$ is disjoint from $F$ and is unchanged by the construction. Thus $X_\Gamma$ is toric, and $x$ is $T$-fixed.

It remains to prove that $X_\Gamma$ is projective. Since $Y$ is proper and $\nu:Y\to X_\Gamma$ is surjective, \cite[\href{https://stacks.math.columbia.edu/tag/03GN}{Lemma 03GN}]{stacks-project} implies that $X_\Gamma$ is proper. Choose a hyperplane section $D\subseteq Y$ avoiding $y_1,\ldots,y_m$. Since $D$ is disjoint from the non-isomorphism locus of $\nu$, its image $D_X=\nu(D)$ is an effective Cartier divisor on $X_\Gamma$ and
$\nu^*\mathcal O_{X_\Gamma}(D_X)\simeq\mathcal O_Y(D)$.
By \cite[\href{https://stacks.math.columbia.edu/tag/0B5V}{Lemma 0B5V}]{stacks-project}, $\mathcal O_{X_\Gamma}(D_X)$ is ample. Therefore $X_\Gamma$ is projective.
\end{proof}

\begin{remark}
Note that the point $x$ has no $T$-invariant affine open neighborhood. Indeed, if $U\subset X_\Gamma$ were such a neighborhood, then, since $\nu$ is finite and $T$-equivariant, $\nu^{-1}(U)$ would be a $T$-invariant affine open subset of $Y$ containing the distinct torus-fixed points $y_1,\ldots,y_m$. This is impossible, since an affine toric variety has at most one torus-fixed point \cite[Proposition 1.3.2]{CLS11}. Thus $X_\Gamma$ is toric under the convention used here, but not in the sense of \cite[Definitions 42 and 43]{GPT14}, where a finite cover by $T$-invariant affine open subsets is required.
\end{remark}

\section{Main result}
Recall from Definition \ref{defcgamma} that $C_{\Gamma}$ is a union of $| \Gamma | = m$ concurrent lines in $\bbP^d$. In this section, given $C_{\Gamma}$, we construct a reduced, irreducible curve with a single singular point $p$ whose completed tangent cone at $p$ contains $C_{\Gamma}$. This will allow us to prove our main theorem.

\begin{prop}\label{curveconstruct}
   For $d\geq2$ and $C_{\Gamma} \subset \bbP^d$, there exists an irreducible, reduced curve $C$ with a unique singular point $p$ such that the completed tangent cone $\CTC_pC$ contains $C_{\Gamma}$.
\end{prop}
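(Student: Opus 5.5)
Our plan is to build $C$ as a suitable deformation of $C_\Gamma$ that keeps the singularity at $p$ but joins the lines together. We may assume $p=[1:0:\cdots:0]$ and work in affine coordinates $x_1,\dots,x_d$ centered at $p$. The lines of $C_\Gamma$ are then $\{t v_i\}$ for vectors $v_1,\dots,v_m\in k^d$ representing the points of $\Gamma$.

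The first step is to construct a single rational curve with $m$ branches through $p$ whose tangent lines are exactly the lines of $C_\Gamma$. Choose distinct values $a_1,\dots,a_m\in\bbP^1$ and a map $\varphi:\bbP^1\to\bbP^d$ with $\varphi(a_i)=p$ and $d\varphi(a_i)$ spanning the direction $v_i$; after a projective change, also choose $\varphi$ so that the branches at $p$ are smooth. Concretely, take $\varphi=[g_0:g_1:\cdots:g_d]$ with $g_j$ polynomials of large degree $n$ satisfying
\[
g_j(a_i)=0,\qquad g_j'(a_i)=v_{i,j}\,g_0(a_i)
\]
for $j\geq1$, and $g_0(a_i)\neq0$. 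These are $2m$ linear conditions per coordinate, solvable by Hermite interpolation once $n\geq 2m$. Choosing the remaining coefficients generally, we want $\varphi$ to be birational onto its image and an immersion everywhere. We also want it injective away from $\{a_i\}$, with no other point mapping to $p$. For $d\geq3$ this is the usual general-projection argument: generic maps of high degree are embeddings off the prescribed conditions. For $d=2$ extra nodes are unavoidable, so there we instead allow the image to have only ordinary nodes elsewhere, and then normalize those away by a partial normalization. Since partial normalization at other points is a local operation, it leaves $\mathcal O_{C,p}$ unchanged and yields an abstract curve whose only singularity is $p$.

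Let $C$ be the resulting reduced irreducible curve. At $p$ it has $m$ smooth branches with distinct tangent directions $v_i$. The key step is to compare tangent cones: each branch $B_i$ is a closed subscheme through $p$ with $\TC_p B_i$ equal to the line $kv_i$. By the functoriality in Section~\ref{subsec:tangent-cones}, $\TC_pB_i\hookrightarrow\TC_pC$, so $\TC_pC$ contains every line $kv_i$. Because $\TC_pC$ is a closed subscheme, it contains their union $\TC_p(C_\Gamma)$ set-theoretically. We then need containment as schemes, i.e.\ that the reduced union of lines is a closed subscheme of $\TC_pC$. Since $\TC_pC$ is pure one-dimensional with support exactly these lines, its reduction is the union of the lines. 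As $\Gamma$ spans $H$, this union is $C_\Gamma$'s tangent cone, embedded via the tangent space of $C$ at $p$, and the embedding is compatible because the $v_i$ span $k^d=T_p\bbP^d$. Passing to projective completions gives $C_\Gamma\cong\CTC_p(C_\Gamma)\hookrightarrow\CTC_pC$.

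The main obstacle is the genericity step: ensuring that the interpolated map is an immersion and that $p$ has exactly the preimages $a_i$. We also need the tangent space of $C$ at $p$ to be $d$-dimensional, so that the directions are not collapsed. We would handle this by a dimension count on the space of interpolating $(g_0,\dots,g_d)$: bad loci, such as extra preimages of $p$ or cusps, impose more conditions than the parameters they use once $n\gg m$. The $d=2$ case needs the partial normalization caveat described above.
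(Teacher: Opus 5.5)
Your route differs from the paper's and is sound in outline, though three steps need repair. The paper blows up $p$, then blows up the points of $\Gamma\subset E$, and applies Bertini to the very ample bundle $\mu^*A^{\otimes\ell}\otimes\mathcal O(-F)$. This gives a smooth irreducible curve $\widetilde C\subset\operatorname{Bl}_p\bbP^d$ passing through every point of $\Gamma$. Then $C=\pi(\widetilde C)$ is smooth away from $p$ for free, since $\pi$ is an isomorphism off $E$. The strict-transform identification $\PTC_pC=\widetilde C\cap E\supseteq\Gamma$ gives the containment after taking cones. That argument needs no double-point or ramification analysis, is uniform in $d\ge 2$, and keeps $C\subset\bbP^d$. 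Your Hermite-interpolation construction is more explicit: it gives a rational curve with an ordinary $m$-fold point whose tangent cone is supported exactly on $C_\Gamma$, whereas the paper's $\widetilde C$ may meet $E$ outside $\Gamma$. The cost is that all the work moves into the genericity step, which you only sketch.

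\textbf{(i) Genericity for $d\ge 3$.} You cannot simply quote the general projection theorem, because your projection center is constrained. Your dimension count does work, however. For $n\ge 2m+1$, Hermite interpolation shows that on the linear space $V$ of interpolating tuples the maps $g\mapsto(g(s),g(t))$ (for $s\ne t$ off $\{a_i\}$) and $g\mapsto(g(t),g'(t))$ are surjective onto $k^{d+1}\oplus k^{d+1}$. Proportionality therefore imposes $d$ conditions, so the double-point locus has dimension at most $\dim V+2-d$ and the ramification locus at most $\dim V+1-d$. Both are less than $\dim V$ when $d\ge 3$, and the condition $\varphi(s)=p$ is handled the same way.

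\textbf{(ii) Branches.} The branches $B_i$ are not closed subschemes of the irreducible curve $C$, so the functoriality of Section~\ref{subsec:tangent-cones} does not apply as written. Argue directly instead. Suppose $f\in\mathcal O_{\bbP^d,p}$ vanishes on $C$ and has initial form $f_r$. Expanding along the branch at $a_i$ gives $0=f(\varphi(t))=f_r(v_i)(t-a_i)^r+O((t-a_i)^{r+1})$, so $f_r(v_i)=0$. Hence the ideal of $\TC_pC$ lies in the ideal of every line $kv_i$, which gives scheme-theoretic containment of the cone over $\Gamma$ at once. It also makes $\dim T_pC=d$ automatic.

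\textbf{(iii) The case $d=2$.} Your partial normalization produces only an abstract curve. That suffices for the proposition as literally stated, but the proof of Theorem~\ref{main-thm-count-fam} uses $C\subset\bbP^d$. In the plane, extra singularities are avoidable if you take $n=m+1$. Set $\varphi=[g_0:th:h]$ with $h=\prod_i(t-a_i)$, where the $[a_i:1]$ are coordinates of the points of $\Gamma$ and $g_0(a_i)\neq 0$. The image is a rational plane curve of degree $m+1$ with an ordinary $m$-fold point at $p$. Its $\delta$-invariant there, $\binom m2$, already equals the arithmetic genus, so $p$ is the only singular point.
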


\begin{proof}
    {Let $\pi:Y=\operatorname{Bl}_p\bbP^d\to \bbP^d$ be the blowup of $\bbP^d$ at the vertex $p$, with exceptional divisor $E\simeq \bbP^{d-1}$. The points of $\Gamma$ determine points of $E$, namely the tangent directions of the corresponding lines through $p$. Let $\mu:\widehat Y=\operatorname{Bl}_{\Gamma}(Y)\to Y$ be the blowup of the reduced finite set $\Gamma$, with exceptional divisor $F=\sum_{z\in \Gamma}F_z$. Fix an ample line bundle $A$ on $Y$. Since $\mathcal O_{\widehat Y}(-F)$ is $\mu$-very ample, for $\ell \gg0$ the line bundle $M=\mu^*A^{\otimes \ell}\otimes\mathcal O_{\widehat Y}(-F)$ is very ample. Choose $d-1$ general divisors in $|M|$, and let $C'\subset \widehat Y$ be their intersection. By repeated applications of Bertini's Theorem, the curve $C'$ is smooth and irreducible. Since $M|_{F_z}\cong \mathcal O_{\bbP^{d-1}}(1)$, these divisors restrict to $d-1$ general hyperplanes on each $F_z$; hence $C'$ meets each $F_z$ transverseley in one reduced point. This transversality implies that $\mu$ maps $C'$ isomorphically near $C'\cap F_z$ to a smooth curve through $z$. Let $\widetilde C=\mu(C')$. Then $\widetilde C\subset Y$ is a smooth irreducible curve passing through every point of $\Gamma$, with independent defining differentials at those points. A general choice also ensures that $\widetilde C\not\subset E$, so $\widetilde C\cap E$ is finite.}

    {Let $C=\pi(\widetilde C)$ with its reduced structure. Since $\pi$ is an isomorphism away from $E$, the curve $C$ is smooth away from $p$, and it is irreducible and reduced. Thus $\pi|_{\widetilde C}:\widetilde C\to C$ is the normalization map. The preimage of $p$ in $\widetilde C$ contains the points of $\Gamma\subset E$; since $\Gamma$ spans $\bbP^{d-1}$ and $d\geq2$, this gives at least two branches through $p$, so $p$ is singular. Moreover, $\widetilde C$ is the strict transform of $C$, and under the standard identification of the exceptional fiber of $\operatorname{Bl}_p C$ with $\PTC_pC$, the scheme-theoretic intersection $\widetilde C\cap E$ is the projectivized tangent cone $\PTC_pC\subset E=\bbP \operatorname T_p\bbP^d$. Since $\Gamma\subset \widetilde C\cap E$, the scheme $\PTC_pC$ contains $\Gamma$ as a reduced closed subscheme. Taking projective cones, $\CTC_pC$ contains the cone over $\Gamma$, which is $C_\Gamma$.}
\end{proof}

We are now ready to state and prove our main theorem.

\begin{theorem}\label{main-thm-count-fam}
 Let $\mcF_i \to B_i$ be a countable collection of families of varieties of dimensions strictly smaller than $d$, and suppose $\dim(B_i) \leq M$ for every $i$. Then there exists a reduced, irreducible curve $C \subset \bbP^d$ with a single singular point which does not embed in any member of any $\mcF_i$. \end{theorem}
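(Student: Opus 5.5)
The argument combines Corollary~\ref{cor-count-fam} with Proposition~\ref{curveconstruct}.

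First fix an integer $m\geq M+d^2+d-1$. Corollary~\ref{cor-count-fam} then provides a very general set $\Gamma\subset H$ of $m$ points spanning $H$ with the following property: for every $i$, every member $X$ of $\mcF_i$, and every point $q\in X$, there is no vertex-preserving closed immersion $C_\Gamma\cong\CTC_p(C_\Gamma)\hookrightarrow\CTC_qX$. Apply Proposition~\ref{curveconstruct} to this $C_\Gamma\subset\bbP^d$. It gives a reduced, irreducible curve $C$ with a unique singular point $p$ such that $\CTC_pC$ contains $C_\Gamma$. We must also have $C\subset\bbP^d$. This holds because the construction takes $C=\pi(\widetilde C)$ with $\pi:\operatorname{Bl}_p\bbP^d\to\bbP^d$, so $C$ lies in $\bbP^d$ from the start.

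Now suppose, for contradiction, that there is a closed immersion $\iota:C\hookrightarrow X$ into some member $X$ of some $\mcF_i$, and put $q=\iota(p)$. By functoriality of completed tangent cones (Section~\ref{subsec:tangent-cones}) we get a closed immersion $\CTC_pC\hookrightarrow\CTC_qX$. It comes from a degree-preserving surjection of associated graded rings $G_q(X)[z]\to G_p(C)[z]$ that sends $z$ to $z$, so it takes the vertex to the vertex. Composing it with the inclusion $C_\Gamma\subset\CTC_pC$ gives a closed immersion $C_\Gamma\hookrightarrow\CTC_qX$.

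The main point to check is that this composite is vertex-preserving, so that it really is forbidden by Corollary~\ref{cor-count-fam}. The inclusion $C_\Gamma\subset\CTC_pC$ comes from the inclusion $\Gamma\subset\PTC_pC$ by taking projective cones. Concretely, it is induced by a graded surjection $G_p(C)[z]\to S_\Gamma[z]$, where $S_\Gamma$ is the homogeneous coordinate ring of $\Gamma$. Since it is graded, the vertex of $C_\Gamma$, which is cut out by the irrelevant ideal together with $z-1$, goes to the vertex of $\CTC_pC$. Also $C_\Gamma\cong\CTC_p(C_\Gamma)$ by the discussion after Definition~\ref{defcgamma}. So the composite is a vertex-preserving closed immersion $\CTC_p(C_\Gamma)\hookrightarrow\CTC_qX$, which contradicts the choice of $\Gamma$. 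Therefore $C$ embeds in no member of any $\mcF_i$.

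The step most likely to need care is this compatibility. The corollary's proof also uses injectivity on Zariski tangent spaces at the vertex, and that holds automatically for any closed immersion. I expect no further obstacle.
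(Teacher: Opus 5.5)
Your proposal is correct and follows the paper's proof: take $|\Gamma|\geq M+d^2+d-1$, choose a very general $C_\Gamma$ via Corollary~\ref{cor-count-fam}, build $C\subset\bbP^d$ via Proposition~\ref{curveconstruct}, and reach a contradiction through functoriality of completed tangent cones. Your extra checks make explicit what the paper leaves implicit: that $C_\Gamma\subset\CTC_pC\hookrightarrow\CTC_qX$ is induced by graded maps and hence is vertex-preserving, and that $C$ lies in $\bbP^d$ by construction.
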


 \begin{proof}

     Let $|\Gamma|\geq M+d^2+d-1$. By Corollary \ref{cor-count-fam}, the very general $C_\Gamma$ does not admit a vertex-preserving closed immersion $C_\Gamma\cong \CTC_p(C_\Gamma)\hookrightarrow \CTC_qX$ for any point $q$ of any member $X$ of any $\mcF_i$. Pick such a $C_\Gamma$.
     By Proposition \ref{curveconstruct}, there exists a reduced, irreducible curve $C \subset \bbP^d$ with a unique singular point $p$ such that $\CTC_pC$ contains $C_\Gamma$.

     Suppose there were an embedding $C\hookrightarrow X$, where $X$ is a member of some $\mcF_i$, and let $q$ be the image of $p$. Then functoriality of completed tangent cones gives a closed immersion $\CTC_pC\hookrightarrow \CTC_qX$. Since $C_\Gamma\subset \CTC_pC$, this gives a vertex-preserving closed immersion $C_\Gamma\hookrightarrow \CTC_qX$, a contradiction. We thus conclude that $C$ does not embed in any member of any of the $\mcF_i$.
 \end{proof}

Theorem \ref{main_thm} is now a corollary of Theorem \ref{main-thm-count-fam}.

\begin{proof}[Proof of \Cref{main_thm}]
There are countably many projective normal toric varieties of dimension less than $d$. Regard each of them as a family over $\operatorname{Spec} k$. Theorem \ref{main-thm-count-fam} then gives a reduced, irreducible curve $C\subset \bbP^d$ which does not embed in any projective normal toric variety of dimension less than $d$.
\end{proof}

\begin{remark}\label{rem:global-obstruction} {
 Although concurrent lines can be embedded in birational projections of
cones over rational normal curves, there are global obstructions to
embedding even smooth curves in such surfaces. Let
$R\subseteq\bbP^{d-1}$ be a smooth rational curve of degree $n$, and let
$\hat R\subseteq\bbP^d$ be its cone. As described in
Section~\ref{subsec:hirzebruch-surfaces}, there is a proper birational
morphism $\phi:\bbF_n\to\hat R$ which restricts to an isomorphism
$\bbF_n\setminus E\simeq\hat R\setminus\{p\}$, where $p$ is the cone
point. Any embedding of a smooth curve $C\hookrightarrow\hat R$ lifts
to an embedding $C\hookrightarrow\bbF_n$.

We now show more generally that a general smooth curve of genus $g>10$ does not embed in any Hirzebruch surface $\bbF_n$ for $n \geq 0$. By the Brill--Noether
Theorem \cite{GH80}, a general smooth curve $C$ of genus $g$ has
gonality $\lfloor(g+3)/2\rfloor$, which is at least
$\frac{g}{2}+1$. Write the class of $C$ as $aE+bF$, where $E$ is a section with
$E^2=-n$ and $F$ is a fiber. The ruling restricts to a map of degree
$a=C\cdot F$ on $C$, so $a\geq\frac{g}{2}+1$. Since $g>10$, we
have $C\neq E$, and therefore $b-an=C\cdot E\geq0$. Using
$K_{\bbF_n}=-2E-(n+2)F$, adjunction gives
\[
\begin{aligned}
2g-2
  &=(K_{\bbF_n}+C)\cdot C\\
  &=(b-an)(a-2)+a(b-n-2).
\end{aligned}
\]
If $n\geq1$, the first summand is nonnegative, while
$b\geq an$ and $a\geq\frac{g}{2}+1$ give
\[
2g-2\geq a(b-n-2)
\geq\left(\frac{g}{2}+1\right)
       \left(\frac{g}{2}-2\right),
\]
which is impossible for $g>10$. If $n=0$, the two rulings have degrees
$a$ and $b$ on $C$, so both are at least $\frac{g}{2}+1$. In this case
adjunction gives
\[
g=(a-1)(b-1)\geq\frac{g^2}{4},
\]
which is already impossible for $g>4$. We conclude that a general
smooth curve of genus $g>10$ does not embed in any $\bbF_n$ and hence cannot
embed in $\hat R$.}

\end{remark}

\subsection*{Further questions} Our results naturally lead to several questions.

\begin{question}
Which reduced and irreducible curves admit embeddings into a normal projective toric threefold? Which admit embeddings into a weighted projective threefold?
\end{question}

It is also natural to consider higher-dimensional analogues of the embedding problems studied in this paper. By taking an $e$-dimensional complete intersection in $\bbP^d$
containing a curve $C_\Gamma$ that cannot embed in any projective normal toric
variety of dimension less than $d$, we obtain an $e$-dimensional
variety with the same property.

However, in contrast with the case of curves, the corresponding embedding problem for higher-dimensional smooth varieties is substantially more subtle. For instance, in characteristic zero a smooth projective rationally connected threefold contains every smooth projective curve, whereas analogous embedding statements for smooth projective surfaces already fail in simple toric examples.

\begin{example}
    Every smooth projective surface admits an embedding into $\bbP^5$, but not every smooth projective surface embeds into $(\bbP^1)^5$. In fact, there are smooth surfaces that do not embed in $(\bbP^1)^n$ for any $n$. Suppose that $S$ is a smooth surface with \[\operatorname{Pic}(S)\cong \mathbb Z H,\]
where $H$ is ample. For
example, one may take $S=\bbP^2$. By the Noether--Lefschetz Theorem, in
characteristic zero a very general smooth surface of degree $d\geq4$
in $\bbP^3$ also has this property.

Assume that $S$ admits an embedding into $(\bbP^1)^n$. Let
\[\pi_i: S \to\bbP^1,\qquad 1\le i\le n,\]
denote the compositions with the coordinate projections. If $\pi_i$ is
nonconstant, then the line bundle
$L_i=\pi_i^*\mathcal O_{\bbP^1}(1)$ is nef and nontrivial, and therefore
has numerical class $aH$ for some integer $a>0$. Since $H$ is ample,
$L_i^2=a^2H^2>0$. On the other hand, $L_i^2=0$ because $L_i$ is pulled
back from $\bbP^1$, a contradiction. Thus every projection $\pi_i$ is
constant, contradicting the assumption that $S$ is embedded in
$(\bbP^1)^n$.
\end{example}

This example suggests the following problems.

\begin{question}
Which normal projective toric varieties admit embeddings of every smooth projective surface?
\end{question}

\begin{question}
Given a weighted projective fivefold, which smooth projective surfaces
embed in it? More generally, given a normal projective toric fivefold,
which reduced and irreducible projective surfaces embed in it?
\end{question}

\end{document}